\def\latex/{{\protect\LaTeX}}
\def\latexe/{{\protect\LaTeXe}}
\def\amslatex/{{\protect\AmS-\protect\LaTeX}}
\def\tex/{{\protect\TeX}}
\def\amstex/{{\protect\AmS-\protect\TeX}}
\def\bibtex/{{Bib\protect\TeX}}
\def\makeindx/{\textit{MakeIndex}}
\theoremstyle{plain} 
\newtheorem{thm}{Theorem}[section]
\newtheorem{prop}[thm]{Proposition}
\newtheorem{cor}[thm]{Corollary}
\theoremstyle{definition}
\newtheorem{chunk}[thm]{\hspace*{-1.065ex}\bf}
\newtheorem{eg}[thm]{Example}
\newtheorem{rmk}[thm]{Remark}
\theoremstyle{remark}
\newtheorem*{claim*}{Claim}
\newcommand{\bZ}{\mathbb{Z}}
\newcommand{\fm}{\mathfrak{m}}
\newcommand{\ltensor}{\tensor^{\bf{L}}}
\newcommand{\CC}{\mathbb{C}}
\newcommand{\ZZ}{\mathbb{Z}}
\newcommand{\tensor}{\otimes}
\DeclareMathOperator{\id}{id}
\newcommand{\fp}{\mathfrak{p}}
\newcommand{\fq}{\mathfrak{q}}
\newcommand{\fr}{\mathfrak{r}}
 \DeclareMathOperator{\Tor}{Tor}
\DeclareMathOperator{\Ho}{H}
\DeclareMathOperator{\CI}{\textnormal{CI-dim}}
 \DeclareMathOperator{\Supp}{Supp}
 \DeclareMathOperator{\Spec}{Spec}
 \DeclareMathOperator{\pd}{pd}
 \DeclareMathOperator{\height}{height}
 \DeclareMathOperator{\length}{length}
 \DeclareMathOperator{\depth}{depth}
\DeclareMathOperator{\U}{\operatorname{\mathsf{U}}}
\newcommand{\ul}{\underline}
\newcommand{\Ann}{\textup{Ann}}
\def\urltilda{\kern -.15em\lower .7ex\hbox{\~{}}\kern
  .04em}\def\urldot{\kern -.10em.\kern -.10em}\def\urlhttp{http\kern
  -.10em\lower -.1ex\hbox{:}\kern -.12em\lower 0ex\hbox{/}\kern
  -.18em\lower 0ex\hbox{/}} 
\newcommand{\bb}{\left[ \begin{smallmatrix}}
\newcommand{\eb}{\end{smallmatrix} \right]}
\newcommand{\rH}{\mathrm{H}}
\newcommand{\rK}{\mathrm{K}}
\newcommand{\rV}{\mathrm{V}}
\begin{document}

\title[On the depth and reflexivity of tensor products]{On the depth and reflexivity of tensor products}

\author[O. Celikbas]{Olgur Celikbas}
\address{Olgur Celikbas\\
Department of Mathematics \\
West Virginia University\\
Morgantown, WV 26506-6310, U.S.A}
\email{olgur.celikbas@math.wvu.edu}

\author[U. Le]{Uyen Le}
\address{Uyen Le\\
Department of Mathematics \\
West Virginia University\\
Morgantown, WV 26506-6310, U.S.A}
\email{hle1@mix.wvu.edu}

\author[H. Matsui]{Hiroki Matsui} 
\address{Hiroki Matsui\\Graduate School of Mathematical Sciences\\ University of Tokyo, 3-8-1 Komaba, Meguro-ku, Tokyo 153-8914, Japan}
\email{mhiroki@ms.u-tokyo.ac.jp}

\subjclass[2010]{Primary 13D07; Secondary 13H10, 13D05, 13C12}
\keywords{Complexes, depth formula, reflexivity of tensor products, Serre's condition, vanishing of Tor} 
\thanks{Matsui was partly supported by JSPS Grant-in-Aid for JSPS Fellows 19J00158.}

\maketitle{}

\begin{abstract} In this paper we study the depth of tensor products of homologically finite complexes over commutative Noetherian local rings. As an application of our main result, we determine new conditions under which nonzero tensor products of finitely generated modules over hypersurface rings can be reflexive only if both of their factors are reflexive. 

A result of Asgharzadeh shows that nonzero symbolic powers of prime ideals in a local ring cannot have finite projective dimension, unless the ring in question is a domain. We make use of this fact in the appendix and consider the reflexivity of tensor products of prime ideals over hypersurface rings.
\end{abstract} 

\section{Introduction}

Throughout, $R$ denotes a commutative Noetherian local ring with unique maximal ideal $\fm$ and residue field $k$, and all $R$-modules are assumed to be finitely generated.

The results in this paper are motivated by the following beautiful result of Huneke and Wiegand; see  \cite[1.3 and 4.4]{GO}, \cite[1.1]{Onex}, \cite[2.7]{HW1}, and \cite[1.9]{HW2}.

\begin{thm} \label{srt} (Huneke and Wiegand  \cite{HW1, HW2}) Let $R$ be a local hypersurface ring, and let $M$ and $N$ be nonzero finitely generated $R$-modules. Assume $N$ has rank and $M\otimes_RN$ is reflexive. Then $\Tor_i^R(M,N)=0$ for all $i\geq 1$, $M$ is reflexive, $N$ is torsion-free, $\Supp_R(N)=\Spec(R)$, and $\pd_R(M)<\infty$ or $\pd_R(N)<\infty$.
\end{thm}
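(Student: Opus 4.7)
The plan is to combine three classical tools: Auslander's four-term exact sequence linking $M \otimes_R N$ to $\Tor_1^R(M,N)$ and $\Hom_R(M^*, N)$ via the transpose $\Tr M$; the rigidity of Tor over hypersurface rings (if $\Tor_i^R(M,N) = 0$ for some $i$, then $\Tor_j^R(M,N) = 0$ for all $j \geq i$); and the characterization of reflexive modules over Gorenstein rings via Serre's condition $(S_2)$. I would exploit throughout that $R$ is Cohen-Macaulay and that every $R$-module has complexity at most one over a hypersurface.

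The first main step is to establish $\Tor_i^R(M,N)=0$ for all $i \geq 1$. Since $M \otimes_R N$ is reflexive, it satisfies $(S_2)$ and is in particular torsion-free. Inspection of Auslander's exact sequence identifies a ``torsion part'' of $M \otimes_R N$ with $\Tor_1^R(M,N)$ (up to a manageable error term), so this piece has no associated primes of small height. Localizing at primes of height zero and one, and using that $N$ has a rank (so $N_\fp$ is free at minimal primes), forces $\Tor_1^R(M,N)_\fp = 0$ for such $\fp$. Combining this with the support restriction coming from reflexivity yields $\Tor_1^R(M,N) = 0$ globally. Huneke--Wiegand rigidity over the hypersurface then propagates the vanishing to all higher $i$.

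With Tor vanishing in hand, the depth formula gives $\depth_R M + \depth_R N = \depth R + \depth_R(M \otimes_R N)$. Since $M \otimes_R N$ satisfies $(S_2)$, the right-hand side is large enough to force $M$ to satisfy $(S_2)$ locally, hence to be reflexive by the Gorenstein hypothesis; one also concludes that $N$ is torsion-free. Together with the assumption that $N$ has a rank, torsion-freeness gives $\Supp_R(N) = \Spec(R)$. Finally, the dichotomy on projective dimension follows from complexity considerations: over a hypersurface every module has complexity $0$ or $1$; if both $M$ and $N$ had complexity one, then a Betti-number growth argument would produce nonzero $\Tor_i^R(M,N)$ for infinitely many $i$, contradicting the established vanishing, so at least one factor must have finite projective dimension.

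The main obstacle will be Step 2: translating the reflexivity of $M \otimes_R N$ (a depth condition) into support information on $\Tor_1^R(M,N)$ strong enough to invoke rigidity. Auslander's sequence is the essential bridge, and a careful local analysis at primes of height at most one, using the rank hypothesis on $N$, is required before the global conclusion can be made; once this bridge is crossed, the remaining conclusions follow by now-standard depth and complexity arguments.
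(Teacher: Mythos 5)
This theorem is not proved in the paper --- it is quoted from Huneke and Wiegand --- so your outline has to be measured against the argument in \cite{HW1, HW2}. Its overall shape (first establish $\Tor_i^R(M,N)=0$ for all $i\ge 1$, then extract the Serre conditions from the localized depth formula, then obtain the projective-dimension dichotomy from the fact that modules over a hypersurface have complexity at most one) does match the structure of the literature proof. The last two stages are essentially sound: once full Tor-vanishing is known, the depth formula localizes and, since $R$ is Cohen--Macaulay, $\depth_{R_\fp}(N_\fp)\le \dim(R_\fp)=\depth(R_\fp)$, so $M$ inherits $(S_2)$ from $M\otimes_RN$ on $\Supp_R(M\otimes_RN)$; and the dichotomy $\pd_R(M)<\infty$ or $\pd_R(N)<\infty$ is exactly \cite[1.9]{HW2} (your ``Betti-number growth'' phrasing is off, since Betti numbers are eventually constant over a hypersurface, but the support-variety argument you gesture at is valid).

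The genuine gap is Step 1. You list as a ``classical tool'' the assertion that over a hypersurface $\Tor_i^R(M,N)=0$ for some $i$ forces $\Tor_j^R(M,N)=0$ for all $j\ge i$. This is false: for $R=k[\![x,y]\!]/(xy)$ one has $\Tor_1^R\big(R/(x),R/(y)\big)=0$ while $\Tor_2^R\big(R/(x),R/(y)\big)=k$; the same failure is recorded in part (ii) of Example \ref{exson} of this paper. Rigidity over hypersurfaces is available only under additional hypotheses (finite projective dimension, \cite[Theorem 3]{Li}; finite length over an unramified hypersurface, \cite[2.4]{HW1}), none of which you have here, so even a proof of $\Tor_1^R(M,N)=0$ could not be ``propagated.'' Moreover, the proposed route to $\Tor_1^R(M,N)=0$ also does not work: the torsion submodule of $M\otimes_RN$ is not $\Tor_1^R(M,N)$ --- the Auslander sequence identifies the kernel of $M\otimes_RN\to\Hom_R(M^{*},N)$ with $\Ext^1_R(\Tr M,N)$, a different object --- and knowing that $\Tor_1^R(M,N)$ is a torsion module (which does follow from $N$ having rank) gives no mechanism for concluding that it vanishes, since there is no natural embedding of $\Tor_1^R(M,N)$ into $M\otimes_RN$. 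Deducing the full Tor-vanishing from the reflexivity of $M\otimes_RN$ is precisely the content of the second rigidity theorem \cite[2.7]{HW1}; its proof is an induction on dimension built on the pushforward exact sequence $0\to N\to R^{s}\to N_1\to 0$ (available once $N$ is known to be torsion-free with rank), in which the torsion submodule of $M\otimes_RN$ that one controls is $\Tor_1^R(M,N_1)$ rather than $\Tor_1^R(M,N)$. That inductive core cannot be replaced by an appeal to rigidity; as written, your Step 1 assumes the hard part of the theorem.
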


It has been an open problem for quite some time whether or not the module $N$ in Theorem \ref{srt} must also be reflexive; see \cite{GO, HW1, CHW} for the details. In 2019 Celikbas and Takahashi \cite{celikbas2018second} constructed examples settling this query; one of their examples is the following:

\begin{eg} \label{mainex} (\cite[2.5]{celikbas2018second}) Let $R=\CC[\!|x,y,z,w]\!]/(xy)$, $M=R/(x)$ and let $N$ be the Auslander transpose of $R/\fp$, where $\fp=(y,z,w) \in \Spec(R)$. Then $R$ is a reduced local hypersurface ring and $\pd_R(N)<\infty$ (so that $N$ has rank). Moreover, $M$ and $M \otimes_R N$ are both reflexive, but $N$ is not reflexive.
\end{eg}

In this paper, motivated by Theorem \ref{srt} and Example \ref{mainex}, we study the depth of tensor products and determine new conditions that force both of the modules considered in Theorem \ref{srt} to be reflexive. To faciliate the discussion, let us note, in Example \ref{mainex}, the sequence $\{y,z,w\}$ is $M$-regular, but it is not $R$-regular. We prove, if such sequences do not exist locally in the support of the module $M$ considered in Theorem \ref{srt}, then both of the modules in question must be reflexive. More precisely, we prove:

\begin{thm} \label{mainthmintro} Let $R$ be a local hypersurface ring, and let $M$ and $N$ be nonzero $R$-modules such that $M\otimes_RN$ is reflexive. Assume the following conditions hold:
\begin{enumerate}[\rm(i)]
\item $N$ has rank (e.g., $\pd_R(N)<\infty$).
\item Each $M_{\fp}$-regular sequence is $R_{\fp}$-regular for all $\fp \in \Supp_R(M)$. 
\end{enumerate}
Then $M$ and $N$ are both reflexive.
\end{thm}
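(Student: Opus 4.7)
By Theorem~\ref{srt}, the hypotheses of the theorem already give that $\Tor_i^R(M,N) = 0$ for all $i \geq 1$, $M$ is reflexive, $N$ is torsion-free, $\Supp_R(N) = \Spec(R)$, and $\pd_R(M) < \infty$ or $\pd_R(N) < \infty$. So only the reflexivity of $N$ remains. Since the hypersurface $R$ is Gorenstein, $N$ is reflexive iff it satisfies Serre's condition $(S_2)$, i.e., $\depth_{R_\fp}(N_\fp) \geq \min\{2, \depth(R_\fp)\}$ for every $\fp \in \Supp_R(N) = \Spec(R)$. Torsion-freeness of $N$ handles all primes with $\depth(R_\fp) \leq 1$, so the content is to show $\depth_{R_\fp}(N_\fp) \geq 2$ at every prime of height $\geq 2$.

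The natural tool is the Auslander depth formula at such a prime $\fp$, but this requires $\fp \in \Supp_R(M)$. I would in fact show $\Supp_R(M) = \Spec(R)$. Since $M$ is reflexive it is torsion-free, so $\Ass_R(M) \subseteq \Ass_R(R) = \Min(R)$ (the hypersurface $R$ being Cohen--Macaulay). Hypothesis~(ii) applied at the maximal ideal $\fm$ says every $M$-regular element of $R$ is $R$-regular, which is the set-theoretic containment $\bigcup \Ass_R(R) \subseteq \bigcup \Ass_R(M)$. Prime avoidance then places each $\fq \in \Ass_R(R)$ inside some $\fr \in \Ass_R(M) \subseteq \Min(R)$; minimality forces $\fq = \fr$. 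Hence $\Min(R) \subseteq \Ass_R(M) \subseteq \Supp_R(M)$, and since $\Supp_R(M)$ is closed under specialization it equals $\Spec(R)$.

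Now fix any $\fp \in \Spec(R)$ with $\height(\fp) \geq 2$. Both $M_\fp$ and $N_\fp$ are nonzero, the local Tor vanishes, and one of them has finite projective dimension over $R_\fp$, so the Auslander depth formula applies:
\[
\depth_{R_\fp}(M_\fp) + \depth_{R_\fp}(N_\fp) = \depth(R_\fp) + \depth_{R_\fp}\bigl((M \otimes_R N)_\fp\bigr).
\]
Reflexivity localizes, so $(M \otimes_R N)_\fp$ is reflexive over the Gorenstein ring $R_\fp$ and hence satisfies $(S_2)$, giving $\depth_{R_\fp}((M \otimes_R N)_\fp) \geq 2$. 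Hypothesis~(ii) applied at $\fp$ bounds $\depth_{R_\fp}(M_\fp) \leq \depth(R_\fp)$, since a maximal $M_\fp$-regular sequence is $R_\fp$-regular and therefore has length at most $\depth(R_\fp)$. Substituting gives $\depth_{R_\fp}(N_\fp) \geq 2$, as required.

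The main obstacle in this plan is the support identification $\Supp_R(M) = \Spec(R)$: hypothesis~(ii) is phrased only for $\fp \in \Supp_R(M)$, and without this identification the depth formula degenerates at primes where $M_\fp = 0$. The argument above clears that hurdle by combining (ii) at $\fm$ with the torsion-freeness of $M$ supplied by Theorem~\ref{srt}, propagating the support of $M$ outward from the minimal primes of $R$.
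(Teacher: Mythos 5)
Your proof is correct, but it takes a genuinely different route from the paper's. The paper deduces Theorem \ref{mainthmintro} from Proposition \ref{propnew} (converting hypothesis (ii) into the local depth inequalities $\depth_{R_{\fq}}(\fp R_\fq, M_\fq) \le \height_R(\fp)$) followed by Corollary \ref{cor2}, which rests on the general complex-theoretic Theorem \ref{mainthm} and its case analysis. You instead observe that the hypotheses force $\Supp_R(M)=\Spec(R)$: Theorem \ref{srt} makes $M$ reflexive, hence torsion-free, so $\Ass_R(M)\subseteq\Min(R)$ over the Cohen--Macaulay ring $R$; hypothesis (ii) at $\fm$ gives $Z(R)\subseteq Z(M)$, and prime avoidance plus incomparability of minimal primes yields $\Min(R)\subseteq\Ass_R(M)$, whence full support. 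From there the classical Auslander depth formula at each prime of height at least two, together with $\depth_{R_\fp}(M_\fp)\le\depth(R_\fp)$ (again from (ii)) and the $(S_2)$ property of the localized tensor product, finishes the argument. Each step checks out. What is notable is that your argument shows the ``non-full-support case'' of Theorem \ref{mainthmintro} is vacuous: whenever $M\otimes_RN$ is reflexive and (ii) holds, $M$ automatically has full support, so the theorem reduces to the case the authors themselves dismiss as known (via \cite[2.6]{HW1} and the depth formula) just before their proof. Their framing, that the gist is the case $\Supp_R(M)\neq\Spec(R)$, is therefore misleading for this particular corollary; Examples \ref{Hiroki} and \ref{exson} do not contradict you because those modules are not reflexive (resp.\ fail (ii)) and so cannot occur as factors of a reflexive tensor product. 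What the paper's heavier machinery buys is generality: Theorem \ref{mainthm} treats complexes, arbitrary $(S_{n-m})$ conclusions, and finite CI-dimension hypotheses where no analogue of Theorem \ref{srt} is available to hand you reflexivity (hence torsion-freeness) of $X$ for free, so your support trick would not apply there.
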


It is worth pointing out that the condition in part (ii) of Theorem \ref{mainthmintro} holds provided that the module $M$ in question has full support; see Corollary \ref{corpropnew} (note the module $M$ considered in Example \ref{mainex} does not have full support). On the other hand, there are examples of modules -- without full support -- that satisfy the aforementioned condition of Theorem \ref{mainthmintro}; see Examples \ref{Hiroki} and \ref{exson}.

In Section 3 we establish Theorem \ref{mainthmintro} as a consequence of our main result, namely Theorem \ref{mainthm}, which concerns the depth of (derived) tensor products of homologically finite complexes that have finite complete intersection dimension over local rings. The proof of Theorem \ref{mainthmintro} relies upon a relation between the condition in part (ii) of Theorem \ref{mainthmintro} and a certain depth inequality, which does not hold for the module $M$ in Example \ref{mainex}; see Corollary \ref{cor2}, Example \ref{ref}, and Proposition \ref{propnew}. 

In Section 4 we compare Theorem \ref{mainthmintro} with the main result of \cite{Onex}, in which the reflexivity of tensor products of modules under the setting of Theorem \ref{srt} is also studied. We give examples and highlight that the condition we consider in part (ii) of Theorem \ref{mainthmintro} is independent of the main tool used in \cite{Onex}; see the examples and the first paragraph in Section 4.

In the appendix we give an application of Theorem \ref{srt}: we prove that, if the tensor product of two prime ideals is reflexive over a hypersurface ring that is not a domain, then both of the primes considered must be minimal. In fact, due to the work of Asgharzadeh \cite{Mohsen}, we are able to state our result in terms of the tensor product of symbolic powers of prime ideals; see Remark \ref{surp} and Corollary \ref{appcor1}. 

\section{Preliminaries} 

We start by recording some definitions and preliminary results that are needed for our arguments.

\begin{chunk} \label{cx1} \textbf{Complexes (\cite{Larsbook}).} Throughout, by an $R$-complex $X$, we mean a chain complex of $R$-modules which has homological differentials $\partial^X_i\colon X_i\to X_{i-1}$, and which is homologically finite, i.e., $\rH_i(X)=0$ for all $|i|\gg 0$ and each $\rH_i(X)$ is a finitely generated $R$-module.

If $X$ is a (not necessarily homologically finite) $R$-complex, we set:
$$
\sup X = \sup \{i \in \bZ \mid \rH_i(X) \neq 0\}
\mbox{ and }
\inf X = \inf \{i \in \bZ \mid \rH_i(X) \neq 0\}.
$$
Note we have that $\sup X = -\infty$ if and only if $\rH(X)=0$ if and only if $\inf X = \infty$. 
\end{chunk}

\begin{chunk} \label{cx} 
If $X$ and $Y$ are $R$-complexes, then it follows that $\inf(X\ltensor_RY)=\inf(X)+\inf(Y)$; see \cite[(A.4.11) and (A.4.16)]{Larsbook}. Here, $X \ltensor_R Y$ denotes the derived tensor product of $X$ and $Y$.
\end{chunk}

\begin{chunk} \textbf{Annihilator and Support (\cite[A.8.4]{Larsbook}).} \label{Support} 
  The \emph{annihilator} and \emph{support} of an $R$-complex $X$ is:
\begin{align*} \notag 
\Ann_R(X) =\bigcap_{i\in \ZZ} \Ann_R(\Ho_{i}(X))  \text{ and }
\Supp_R(X)=\bigcup_{i\in \ZZ}\Supp_R(\Ho_{i}(X)).
\end{align*}
Note that the equality $\Supp_R(X) = \rV(\Ann_R(X))$ holds.
\end{chunk}

\begin{chunk} \label{dcx1} \textbf{Depth of complexes (\cite{Larsbook, I}).}
Let $X$ be an $R$-complex, $I$ an ideal of $R$, and let $\ul{x}=x_1, \ldots, x_n$ be a generating set of $I$.
Then the {\it $I$-depth} of $X$ is defined as:
$$
\depth_R(I, X) = n - \sup(\rK(\ul{x}) \ltensor_R X).
$$
Here $\rK(\ul{x})$ is the Koszul complex on $\ul{x}$; see \cite[\S 2]{I} and \cite[(A.6.1)]{Larsbook}.  It is
known that this definition is independent of the choice of generators of $I$ \cite[1.3]{I}. It follows that $-\infty < \depth_R(I, X) \le n-\sup X$.

We set $\depth_R (X)= \depth_R(\fm, X)$. Then, by our convention for complexes, $\depth_R(X)$ is finite provided that $\Ho(X)
\neq 0$; see \cite[Observation on page 549]{I}. Note also that $\depth_R(0)=\infty$.
\end{chunk}

The following facts play an important role in the proof of Theorem \ref{mainthm}.

\begin{chunk} \label{dcx} Let $X$ be an $R$-complex and $I$ an ideal of $R$. Then the following hold:

\begin{enumerate}[\rm(i)]
\item $\depth_{R_\fp}(X_\fp) \ge \depth_{R_\fq}(X_\fq) - \dim( R_{\fq} / \fp R_{\fq})$ for each $\fp, \fq \in \Spec(R)$ with $\fp \subseteq \fq$; see \cite[(A.6.2)]{Larsbook}. 
\item $\depth_R(I, X) = \inf \{\depth_{R_\fp}(X_\fp) \mid \fp \in \rV(I)\}$; see \cite[2.10]{FS}. 
\item $\depth_R(I, X) = \depth_R(\sqrt{I}, X)$.
\item $\depth_R(I+\Ann_R(X), X)= \depth_R(I, X)$.
\end{enumerate}
Note that, as $\rV(I)=\rV(\sqrt{I})$ and $\rV\big(I+\Ann_R(X)\big)=\rV(I) \cap \Supp_R(X)$, part (ii) yields parts (iii) and (iv).
\end{chunk}

\begin{chunk} \textbf{Serre's condition for complexes (\cite{GO}).} \label{sc} Let $X$ be an $R$-complex and let $n\geq 0$ be an integer. Then $X$ is said to satisfy  {\it Serre's condition} $(S_n)$ if the following inequality holds for each prime ideal $\fp$ of $R$:
$$
\depth_{R_\fp}(X_\fp) + \inf (X_\fp) \ge \min\{n, \dim(R_{\fp})\}.
$$
\end{chunk}

\begin{chunk} \textbf{Complete intersection dimension of complexes (\cite{AGP, Sean}).} \label{CI1} Let $X$ be an $R$-complex. A diagram of local ring maps $R \to R' \twoheadleftarrow S$ is called a \emph{quasi-deformation} provided that $R \to R'$ is flat and the kernel of the surjection $R' \twoheadleftarrow S$ is generated by a regular sequence on $S$. 
The \emph{complete intersection dimension} of $X$ is defined as: 
\[
\CI_R(X)=\inf\{ \pd_S( X\ltensor_R R') -\pd_S(R'): R \to R' \twoheadleftarrow S \text{ is a quasi-deformation}\}.
\]
\end{chunk}

Some facts about the complete intersection dimension are recorded next:

\begin{chunk}\label{CI} Let $X$ be an $R$-complex. Then the following hold:
\begin{enumerate}[\rm(i)]
\item $\CI_R(X) \in \{-\infty\} \cup \ZZ \cup \{\infty\}$; see \cite[3.2.1]{Sean}.
\item  $\CI_R(X)=-\infty$ if and only if $\rH(X)=0$; see \cite[3.2.2]{Sean}. 
\item $\inf(X)\leq \sup(X) \leq \CI_R(X)$; see \cite[3.3]{Sean}.
\item If $\CI_R(X)<\infty$, then $\CI_R(X)=\depth(R)-\depth_R(X)$; see \cite[3.3]{Sean}.
\item If $R$ is a complete intersection, then $\CI_R(X)<\infty$; see \cite[3.5]{Sean}.
\end{enumerate}
\end{chunk}

\begin{chunk} \textbf{Derived Depth Formula.} \label{DF} Let $X$ and $Y$ be $R$-complexes. If $\CI_R(X)<\infty$ and $X \ltensor_R Y$ is bounded, i.e., $\Tor_{i}^{R}(X,Y)=0$ for all $i\gg 0$, then the equality $\depth_R(X) + \depth_R(Y) = \depth(R) + \depth_R(X \ltensor_R Y)$ holds, i.e., the pair $(X,Y)$ satisfies the \emph{derived depth formula}; see \cite[4.4]{CJ}. 
\end{chunk}

We need a few arguments from the proof of \cite[3.1]{GO} to establish our main result. In the following, for the sake of completeness, we include the arguments we need, along with a few additional details that are not explicitly stated in \cite{GO}.

\begin{chunk} (\cite[see the proof of 3.1]{GO}) \label{eski} Let $X$ and $Y$ be $R$-complexes such that $\rH(X)\neq 0 \neq \rH(Y)$. Assume $\CI_R(X)<\infty$. Assume further $X \ltensor_R Y$ is bounded and satisfies $(S_{n})$ for some $n\geq 0$. 

Let $\fp \in \Supp_R(Y)$. We proceed and look at $\depth_{R_{\fp}}(Y_{\fp}) +\inf(Y_\fp)$. We pick a minimal prime ideal $\fq$ of $\fp + \Ann_R(X)$ and consider the following three cases separately: $\dim(R_\fq) \le n$, $\dim (R_\fq) > n$, and  $\fp \in \Supp_R(X)$. Note that, by the choice, we have that $\fq \in \Supp_R(X \ltensor_R Y)$.

\emph{Case 1}. Assume $\dim(R_{\fq})\leq n$. Then it follows:
\begin{align} \notag{}
\depth_{R_{\fp}}(Y_{\fp}) +\inf(Y_\fp)&=\depth_{(R_{\fq})_{\fp R_{\fq}}}\left((Y_{\fq})_{\fp R_{\fq}}\right) +\inf((Y_\fq)_{\fp R_\fq})  &\\ \notag{}
& \geq \big[\depth_{R_{\fq}}(Y_{\fq}) -\dim(R_{\fq}/\fp R_{\fq})\big]+\inf(Y_\fq) &\\ \notag{} 
& = \big[\depth_{R_{\fq}}(X_\fq \ltensor_{R_\fq} Y_\fq)+\depth(R_{\fq})-\depth_{R_{\fq}}(X_{\fq})\big] -\dim(R_{\fq}/\fp R_{\fq})+\inf(Y_\fq) &\\ \notag{} 
& = \big[\depth_{R_{\fq}}(X_\fq \ltensor_{R_\fq} Y_\fq)+\CI_{R_{\fq}}(X_{\fq})\big] -\dim(R_{\fq}/\fp R_{\fq})+\inf(Y_\fq) &\\ \tag{\ref{eski}.1} 
& \geq \big[\depth_{R_{\fq}}(X_\fq \ltensor_{R_\fq} Y_\fq)+ \inf(X_\fq) \big] -\dim(R_{\fq}/\fp R_{\fq})+\inf(Y_\fq) &\\ \notag{} 
& = \depth_{R_{\fq}}(X_\fq \ltensor_{R_\fq} Y_\fq) +\inf(X_\fq \ltensor_{R_\fq} Y_\fq) -\dim(R_{\fq}/\fp R_{\fq}) &\\ \notag{} & \geq \min\{n, \dim(R_{\fq})\} - \dim(R_{\fq}/\fp R_{\fq}) \notag{} &\\ \notag{}  &\geq \dim(R_{\fq})-  \dim(R_{\fq}/\fp R_{\fq}) &\\  \notag{} &\geq \dim(R_{\fp}) &\\ \notag{} 
& \geq \min\{n, \dim(R_{\fp})\}. \notag{}
\end{align}
Here, in (\ref{eski}.1), the first inequality follows from \ref{dcx}(i) and the definition of inf, the second inequality follows from \ref{CI}(iii), and the third inequaliy is due to the fact that $X \ltensor_R Y$ satisfies $(S_{n})$; note that the other inequalities are standard. For the equalities in (\ref{eski}.1), the first one is due to localization, the second one follows from \ref{DF}, the third one is due to \ref{CI}(iv), and the fourth one can be obtained by \ref{cx}.

\emph{Case 2}. Assume $\dim(R_{\fq})> n$, and set $t=\depth_{R_{\fq}}(X_\fq \ltensor_{R_\fq} Y_\fq)+\inf(X_\fq \ltensor_{R_\fq} Y_\fq)$. Then it follows:
\begin{align} \notag{}
\depth_{R_{\fp}}(Y_{\fp}) + \inf(Y_\fp)& \geq \depth_{R_{\fq}}\big(Y_{\fq}) + \inf(Y_\fq) -\dim(R_{\fq}/\fp R_{\fq}\big) &\\ \notag{} 
& = \depth(R_{\fq})-\big(\depth_{R_{\fq}}(X_{\fq})+\inf(X_\fq)\big) + t-\dim(R_{\fq}/\fp R_{\fq}) &\\ \tag{\ref{eski}.2} 
&\geq \depth(R_{\fq})-\big(\depth_{R_{\fq}}(X_{\fq}) +\inf(X_\fq)\big)+ n - \dim(R_{\fq}/\fp R_{\fq})  &\\\notag{} 
&\geq \depth(R_{\fq})-(\depth_{R_{\fq}}(X_{\fq}) +\inf(X_\fq))+ n + \dim(R_{\fp}) -\dim(R_{\fq}).  
\end{align}
Here, in (\ref{eski}.2), the first inequality is due to \ref{dcx}(i) and the definition of inf, the second inequality is due to the fact that $X \ltensor_R Y$ satisfies $(S_{n})$ and $\dim(R_{\fq})> n$, and the third inequality is standard. Moreover, the equality in (\ref{eski}.2) follows from \ref{cx} and \ref{DF}.

\emph{Case 3}. Assume $\fp \in \Supp_R(X)$. Then one has $\fq = \fp$. Set $\dim(R_{\fp})=l$. If $l\leq n$, then it follows by Case 1 that $\depth_{R_{\fp}}(Y_{\fp}) +\inf(Y_\fp)\geq \min\{n, \dim(R_{\fp})\}$. Next assume $l> n$. Then it follows that:
\begin{align} \notag{}
\depth_{R_{\fp}}(Y_{\fp}) + \inf(Y_\fp) &\geq \depth(R_{\fp})-\big(\depth_{R_{\fp}}(X_{\fp}) +\inf(X_\fp)\big)+ n + \dim(R_{\fp}) -\dim(R_{\fp}) \notag{}\\
&= \CI_{R_{\fp}}(X_{\fp}) - \inf(X_\fp) + n  \tag{\ref{eski}.3} \\
&\geq n \geq \min\{n, \dim(R_\fp)\}. \notag{}
\end{align}
Here, in (\ref{eski}.3), the first inequality follows by Case 2, the second inequality is due to \ref{CI}(iii), and the equality is due to  \ref{CI}(iv). So, if $\fp \in \Supp_R(X)$, we have that $\depth_{R_\fp}(Y_\fp) + \inf Y_\fp \geq \min \{n, \dim(R_\fp)\}$. 
\end{chunk}

\section{Proof of the main theorem and corollaries}

We are now ready to prove the main result in this paper, namely Theorem \ref{mainthm}. The proof of the theorem is motivated by the results given in \ref{eski}, but the gist of our argument is different: the finishing touch of the proof of Theorem \ref{mainthm} relies upon an application of the properties stated in \ref{dcx}. 

We set, for $\fq\in \Spec(R)$, that $\U(\fq)=\{\fp \in \Spec(R): \fp \subseteq \fq \text{ where } \height_R(\fp)>0\}$.

\begin{thm} \label{mainthm} Let $R$ be a local ring, and let $X$ and $Y$ be $R$-complexes such that $\rH(X)\neq 0 \neq \rH(Y)$. Assume $m$ and $n$ are nonnegative integers and the following conditions hold:
\begin{enumerate}[\rm(i)]
\item $\CI_R(X)<\infty$.
\item $X \ltensor_R Y$ is bounded, i.e., $\Tor_{i}^{R}(X,Y)=0$ for all $i\gg 0$. 
\item $X \ltensor_R Y$ satisfies Serre's condition $(S_{n})$.
\item If $\fq \in \Supp_R(X \ltensor_R Y)$, then $\depth_{R_{\fq}}(\fp R_\fq, X_\fq) +\inf(X_\fq)\le \depth_{R_{\fq}}(\fp R_\fq, R_\fq)+m$ for all  $\fp \in \U(\fq)$.
\end{enumerate}
Then $Y$ satisfies Serre's condition $(S_{n-m})$.
\end{thm}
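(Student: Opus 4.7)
The plan is to follow the case-analysis strategy of \ref{eski} and substitute condition (iv) for the bound $\CI_{R_\fq}(X_\fq) \geq \inf(X_\fq)$ that comes from \ref{CI}(iii)--(iv). It suffices to verify the Serre inequality $\depth_{R_\fp}(Y_\fp) + \inf(Y_\fp) \geq \min\{n-m, \dim(R_\fp)\}$ at each $\fp \in \Supp_R(Y)$; outside $\Supp_R(Y)$ we have $Y_\fp = 0$ and the inequality is vacuous. Fix such a $\fp$ and pick a minimal prime $\fq$ of $\fp + \Ann_R(X)$. Then $\fq \in \Supp_R(X) \cap \Supp_R(Y)$, and by \ref{cx}, $\fq \in \Supp_R(X \ltensor_R Y)$. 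Moreover, the minimality of $\fq$ forces no $\fr \in [\fp, \fq] \setminus \{\fq\}$ to lie in $\Supp_R(X)$, so by \ref{dcx}(ii) one has $\depth_{R_\fq}(\fp R_\fq, X_\fq) = \depth_{R_\fq}(X_\fq)$.

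Assuming $\height(\fp) > 0$, condition (iv) applied at the pair $(\fq, \fp)$ then translates to
\[
\depth_{R_\fq}(X_\fq) + \inf(X_\fq) \leq \depth_{R_\fq}(\fp R_\fq, R_\fq) + m.
\]
Running the chain of inequalities from \ref{eski} Case 1 ($\dim(R_\fq) \leq n$) with this bound in place of the one from \ref{CI}(iii)--(iv), one combines \ref{dcx}(i), the derived depth formula \ref{DF}, the additivity $\inf(X_\fq \ltensor_{R_\fq} Y_\fq) = \inf(X_\fq) + \inf(Y_\fq)$ from \ref{cx}, and condition (iii) at $\fq$. The resulting expression is simplified via the grade/height/dimension chain $\depth_{R_\fq}(\fp R_\fq, R_\fq) \leq \height(\fp R_\fq) \leq \dim(R_\fq) - \dim(R_\fq/\fp R_\fq)$ together with $\dim(R_\fq) - \dim(R_\fq/\fp R_\fq) \geq \dim(R_\fp)$. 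When $\dim(R_\fq) > n$, further split on whether $\fp \in \Supp_R(X)$: in the affirmative case $\fq = \fp$, and condition (iv) at $(\fp, \fp)$ combined with the derived depth formula at $\fp$ yields $\depth_{R_\fp}(Y_\fp) + \inf(Y_\fp) \geq n - m$ directly; in the negative case, one extends the Case 2 reasoning of \ref{eski}, substituting condition (iv) for the CI-dim bound. The edge cases $\height(\fp) = 0$ or $\height(\fq) = 0$ (where (iv) does not apply at these pairs) are handled separately using that $\dim(R_\fp) = 0$ forces $\min\{n-m, \dim(R_\fp)\} \leq 0$, together with the basic estimate $\depth + \inf \geq 0$ for nonzero bounded complexes.

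The hardest part will be to reconcile the Case 1 computation with the sharp target $\min\{n-m, \dim(R_\fp)\}$. A direct substitution of condition (iv) at the pair $(\fq, \fq)$ gives only the weaker bound $\min\{n, \dim(R_\fp)\} - m$, which falls short of the target when $m > 0$ and $\dim(R_\fp) < n$. Using condition (iv) at the finer pair $(\fq, \fp)$ so as to exploit $\depth_{R_\fq}(\fp R_\fq, R_\fq) \leq \depth(R_\fq)$, and combining with the grade/height/dimension chain above, should close this gap.
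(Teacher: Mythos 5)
Your overall architecture matches the paper's: you localize at the same minimal prime $\fq$ of $\fp+\Ann_R(X)$, you isolate the same key identity $\depth_{R_\fq}(\fp R_\fq, X_\fq)=\depth_{R_\fq}(X_\fq)$ (the paper derives it from \ref{dcx}(iii)--(iv) using that $\fp R_\fq+\Ann_{R_\fq}(X_\fq)$ is $\fq R_\fq$-primary, you from \ref{dcx}(ii); both are fine), and your handling of the case $\dim(R_\fq)>n$ --- feeding hypothesis (iv) at the pair $(\fq,\fp)$ through $\depth_{R_\fq}(\fp R_\fq,R_\fq)\le\depth(R_\fp)\le\dim(R_\fp)$ into the Case~2 chain of \ref{eski} --- is exactly the paper's contradiction argument run forwards. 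The subcase $\fp\in\Supp_R(X)$ also checks out.

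The genuine gap is in the case $\dim(R_\fq)\le n$, and you have correctly located it, but your proposed repair does not close it. Substituting (iv) for the complete-intersection bound in the Case~1 chain yields, after using the derived depth formula, \ref{cx}, $(S_n)$ at $\fq$, and $\dim(R_\fq)-\dim(R_\fq/\fp R_\fq)\ge\dim(R_\fp)$,
\[
\depth_{R_\fp}(Y_\fp)+\inf(Y_\fp)\;\ge\;\dim(R_\fp)+\big(\depth(R_\fq)-\depth_{R_\fq}(\fp R_\fq,R_\fq)\big)-m,
\]
and the parenthetical term is only known to be nonnegative by \ref{dcx}(ii); it cannot be bounded below by $m$. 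So you land at $\dim(R_\fp)-m$, which misses the target $\min\{n-m,\dim(R_\fp)\}=\dim(R_\fp)$ by exactly $m$ whenever $m>0$ and $\dim(R_\fp)\le n-m$. Routing instead through $\depth_{R_\fq}(\fp R_\fq,R_\fq)\le\height_R(\fp)\le\dim(R_\fq)-\dim(R_\fq/\fp R_\fq)$, as you suggest, changes nothing: every estimate that passes through hypothesis (iv) carries the additive error $m$, so no rearrangement of these inequalities can recover it. The resolution is that in this case you should not substitute at all: hypothesis (i) is still in force, and \ref{CI}(iii)--(iv) give $\depth_{R_\fq}(X_\fq)+\inf(X_\fq)\le\depth(R_\fq)$ with no loss; feeding that into the same chain gives the full bound $\dim(R_\fp)\ge\min\{n,\dim(R_\fp)\}$. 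This is precisely why the paper quotes Case~1 of \ref{eski} verbatim here (that case uses only hypotheses (i)--(iii)) and reserves hypothesis (iv) exclusively for the regime $\dim(R_\fq)>n$, where the $(S_n)$ hypothesis already supplies the constant $n$ and a loss of $m$ is affordable.

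A smaller point: the ``basic estimate'' $\depth_{R_\fp}(Y_\fp)+\inf(Y_\fp)\ge0$ that you invoke at height-zero primes holds for modules but not for arbitrary homologically finite complexes (the depth of a complex can be as small as $-\sup$); the paper dismisses $\dim(R_\fp)=0$ as trivial as well, so you are no worse off, but the justification you give is not correct as stated.
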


\begin{proof} Let $\fp \in \Supp_R(Y)$. We want to show that the following inequality holds:
\begin{align} \tag{\ref{mainthm}.1}
\depth_{R_\fp}(Y_\fp) + \inf (Y_\fp) \ge \min \{n -m, \dim(R_{\fp})\}.
\end{align}

If $\dim(R_{\fp})=0$, then (\ref{mainthm}.1) holds trivially. Moreover, if $\fp \in \Supp_R(X)$, then the inequality (\ref{mainthm}.1) holds by Case 3 of \ref{eski}. Hence we assume $\dim(R_{\fp})>0$, $\fp \notin \Supp_R(X)$, and pick a prime ideal $\fq$ of $R$ which is minimal over $\fp +\Ann_R(X)$. 
Then it follows that $\fq \in \Supp_R(X \ltensor_R Y)$. 

If $\dim(R_{\fq})\leq n$, then (\ref{mainthm}.1) holds by Case 1 of \ref{eski}. Hence, we further assume that $\dim(R_{\fq})> n$. Therefore, Case 2 of \ref{eski} yields:
\begin{align} \tag{\ref{mainthm}.2}
\begin{aligned}
\depth_{R_{\fp}}(Y_{\fp}) + \inf(Y_\fq) & \geq  \depth(R_{\fq})-(\depth_{R_{\fq}}(X_{\fq}) +\inf(X_\fq))+ n + \dim(R_{\fp}) -\dim(R_{\fq})&\\ \notag{} 
&=n + \dim(R_{\fp})-(\depth_{R_{\fq}}(X_{\fq}) +\inf(X_\fq)).
\end{aligned}
\end{align}

Now we suppose $\depth_{R_\fp} (Y_\fp) +\inf(Y_\fp) < \min\{n - m, \dim (R_\fp)\}$ and look for a contradiction. Note that we have $n-m > \depth_{R_\fp} (Y_\fp) +\inf(Y_\fp)$ and so (\ref{mainthm}.2) shows:
\begin{align} \tag{\ref{mainthm}.3} 
\dim(R_{\fp})<\depth_{R_{\fq}}(X_{\fq}) +\inf(X_\fq)-m.
\end{align}
Note that the following inequalities hold:
\[\begin{array}{rl}\tag{\ref{mainthm}.4}
\depth_{ R_{\fq} } (\fp R_{\fq}, X_{\fq} ) +\inf(X_\fq) -m & \le \depth_{ R_{\fq} } (\fp R_{\fq}, R_{\fq} ) \\ & \le \depth((R_{\fq})_{\fp R_{\fq}}) \\ & = \depth(R_{\fp}) \\ & \le \dim(R_{\fp})  \\ & < \depth_{R_{\fq}}(X_{\fq}) +\inf(X_\fq) -m.
\end{array}\] 
In (\ref{mainthm}.4), the first inequality is due to the hypothesis (iv) since $\fq \in \Supp_R(X \ltensor_R Y)$ and $\fp \in \U(\fq)$. Moreover, the second inequality of (\ref{mainthm}.4) follows from \ref{dcx}(ii), the third one is by \cite[1.2.12]{BH}, and the forth one is due to (\ref{mainthm}.3). Hence (\ref{mainthm}.4) gives:
\begin{equation}\tag{\ref{mainthm}.5}
\depth_{ R_{\fq} } (\fp R_{\fq}, X_{\fq} )<\depth_{R_{\fq}}(X_{\fq}).
\end{equation}

On the other hand, we have:
\[\begin{array}{rl}\tag{\ref{mainthm}.6}
\depth_{R_{\fq}}(X_{\fq}) & =  \depth_{R_{\fq}}(\fq R_{\fq}, X_{\fq})\\ & =  
\depth_{R_{\fq}}\Big(\sqrt{\fp R_{\fq}+\Ann_{R_{\fq}}(X_{\fq})}, X_{\fq}\Big)\\
&=\depth_{R_{\fq}}(\fp R_{\fq}+\Ann_{R_{\fq}}(X_{\fq}), X_{\fq})  \\
&=\depth_{R_{\fq}}(\fp R_{\fq}, X_{\fq})\\
\end{array}\] 
In (\ref{mainthm}.6), the first equality follows from \ref{dcx1}, the second one is due to the fact that $\fp R_{\fq}+\Ann_{R_{\fq}}(X_{\fq})$ is $\fq R_{\fq}$-primary, and the last two equalities follow from \ref{dcx}(iii) and \ref{dcx}(iv), respectively. 

Consequently, in view of (\ref{mainthm}.5) and (\ref{mainthm}.6), we obtain a contradiction. This contradiction implies that the inequality (\ref{mainthm}.1) holds, and hence completes the proof.
\end{proof}

We proceed by recording some consequences of Theorem \ref{mainthm}. 

\begin{cor} \label{cor1} Let $R$ be a local ring, $M$ and $N$ be finitely generated $R$-modules, and let $m$ and $n$ be nonnegative integers. 
Assume the following hold:
\begin{enumerate}[\rm(i)]
\item $\CI_R(M)<\infty$.
\item $\Tor_i^R(M,N)=0$ for all $i\geq 1$.
\item $M\otimes_RN$ satisfies Serre's condition $(S_{n})$.
\item If $\fq \in \Supp_R(M \otimes_R N)$, then $\depth_{R_{\fq}}(\fp R_\fq, M_\fq) \le \depth_{R_{\fq}}(\fp R_\fq, R_\fq)+m$ for all  $\fp \in \U(\fq)$.
\end{enumerate}
Then $N$ satisfies Serre's condition $(S_{n-m})$.
\end{cor}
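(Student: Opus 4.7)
The plan is to deduce Corollary~\ref{cor1} directly from Theorem~\ref{mainthm} by regarding the modules $M$ and $N$ as $R$-complexes concentrated in degree $0$. One may assume $M \neq 0$ and $N \neq 0$, since otherwise $M \otimes_R N = 0$ and the conclusion is either vacuous or trivial; under this assumption $\rH(M) \neq 0 \neq \rH(N)$, as required for the theorem.

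The key numerical observation that makes the specialization work is that for a nonzero finitely generated module viewed as a complex, $\inf(M_{\fq}) = 0$ for every $\fq \in \Supp_R(M)$. This is precisely what will align hypothesis (iv) of Theorem~\ref{mainthm} with hypothesis (iv) of the corollary, where no $\inf$ term appears.

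Next I would verify the four hypotheses of Theorem~\ref{mainthm} with $X = M$ and $Y = N$. Hypothesis (i), $\CI_R(M) < \infty$, is immediate. Hypothesis (ii), that $M \ltensor_R N$ is bounded, follows from $\Tor_i^R(M,N) = 0$ for $i \geq 1$; this vanishing means $M \ltensor_R N$ is quasi-isomorphic to the module $M \otimes_R N$ concentrated in degree $0$. Hypothesis (iii) transfers verbatim, because the complex-theoretic Serre condition of \ref{sc}, applied to a module placed in degree $0$, coincides with the classical $(S_n)$ condition. For hypothesis (iv), note that $\Supp_R(M \ltensor_R N) = \Supp_R(M \otimes_R N) \subseteq \Supp_R(M)$, so any $\fq \in \Supp_R(M \ltensor_R N)$ lies in $\Supp_R(M)$; consequently $\inf(M_{\fq}) = 0$, and the inequality
\[
\depth_{R_{\fq}}(\fp R_\fq, M_\fq) + \inf(M_\fq) \le \depth_{R_{\fq}}(\fp R_\fq, R_\fq) + m
\]
required in (iv) of Theorem~\ref{mainthm} collapses to the inequality assumed in (iv) of the corollary, for every $\fp \in \U(\fq)$.

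Applying Theorem~\ref{mainthm}, I would then conclude that $N$, viewed as a complex, satisfies Serre's condition $(S_{n-m})$, which for the module $N$ coincides with the classical $(S_{n-m})$ condition. There is no genuine obstacle here: Corollary~\ref{cor1} is essentially a module-level repackaging of Theorem~\ref{mainthm}, and the only delicate point is the $\inf(M_{\fq}) = 0$ observation that mediates the translation of hypothesis (iv).
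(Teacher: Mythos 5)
Your argument is correct and is essentially the paper's own proof: the paper simply notes that $\Tor_i^R(M,N)=0$ for all $i\ge 1$ gives $M\ltensor_R N\simeq M\otimes_R N$ and then invokes Theorem~\ref{mainthm}, while you have additionally spelled out the routine verifications (in particular that $\inf(M_\fq)=0$ for $\fq\in\Supp_R(M)$, which reconciles hypothesis (iv) of the theorem with hypothesis (iv) of the corollary). The only caveat is your dismissal of the degenerate cases: if $M=0$ but $N\neq 0$ the conclusion is not actually trivial (it can fail), so the corollary implicitly assumes $M\neq 0$ exactly as Theorem~\ref{mainthm} assumes $\rH(X)\neq 0$; this is an imprecision in the statement's phrasing rather than a defect of your argument.
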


\begin{proof} Note that $M \ltensor_R N \cong M \otimes_R N$ if and only if $\Tor_i^R(M,N)=0$ for all $i\geq 1$. Therefore, it follows by Theorem \ref{mainthm} that $N$ satisfies Serre's condition $(S_{n-m})$.
\end{proof}

\begin{rmk} In \cite{Omo} one can find further results concerning the depth inequality stated in part (iv) of Corollary \ref{cor1}; see also Proposition \ref{propnew}. In fact, when $m=1$, it is proved in \cite{Omo} that the aforementioned inequality always holds over hypersurface rings. More precisely, if $R$ is a hypersurface ring, $I$ an ideal of $R$, and $M$ is a non-zero torsion-free $R$-module which is generically free, then it follows from a result in \cite{Omo} that $\depth(I, M) \le \depth_R(I, R) +1$.\pushQED{\qed} 
\qedhere 
\popQED
\end{rmk}

Modules over hypersurface rings are reflexive if and only if they satisfy Serre's condition $(S_2)$; see, for example, \cite[2.5]{GO}. This fact is used in the next corollary of Theorem \ref{mainthm}.

\begin{cor} \label{cor2} Let $R$ be a local hypersurface ring, and let $M$ and $N$ be nonzero $R$-modules. Assume the following conditions hold:
\begin{enumerate}[\rm(i)]
\item $N$ has rank.
\item If $\fq \in \Supp_R(M)$, then $\depth_{R_{\fq}}(\fp R_\fq, M_\fq) \le \height_R(\fp)$ for all $\fp \in \U(\fq)$.
\end{enumerate}
If $M\otimes_RN$ is reflexive, then $M$ and $N$ are both reflexive. 
\end{cor}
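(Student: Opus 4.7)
The plan is to apply Theorem \ref{srt} to harvest as much of the conclusion as possible up front, and then to feed the remaining work into Corollary \ref{cor1} with $n=2$ and $m=0$.

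First I would invoke Theorem \ref{srt}. Under the present hypotheses on $R$, $M$, $N$ it gives immediately that $M$ is reflexive, that $\Tor_i^R(M,N)=0$ for every $i\geq 1$, and that $\Supp_R(N)=\Spec(R)$. Reflexivity of $M$ is already half of what we want; the Tor-vanishing and the full support of $N$ are precisely the ingredients that let Corollary \ref{cor1} handle the other half, namely that $N$ is reflexive.

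Next I would verify the four hypotheses of Corollary \ref{cor1} with $n=2$ and $m=0$. Hypothesis (i) holds because a hypersurface is a complete intersection, so $\CI_R(M)<\infty$ by \ref{CI}(v); hypothesis (ii) is the Tor-vanishing from the previous step; and since reflexivity over a hypersurface is equivalent to Serre's condition $(S_2)$ (e.g., \cite[2.5]{GO}), hypothesis (iii) holds with $n=2$. For (iv), first observe that $\Supp_R(M\otimes_R N)=\Supp_R(M)\cap\Supp_R(N)=\Supp_R(M)$, using $\Supp_R(N)=\Spec(R)$, so every prime $\fq$ appearing in (iv) lies in $\Supp_R(M)$ and hypothesis (ii) of Corollary \ref{cor2} applies. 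Since $R_\fq$ is Cohen--Macaulay (a localization of a hypersurface), one has $\depth_{R_\fq}(\fp R_\fq, R_\fq)=\height(\fp R_\fq)=\height_R(\fp)$ for every $\fp\in\U(\fq)$, so hypothesis (ii) of Corollary \ref{cor2} reads
\[
\depth_{R_\fq}(\fp R_\fq, M_\fq)\;\leq\;\height_R(\fp)\;=\;\depth_{R_\fq}(\fp R_\fq, R_\fq),
\]
which is exactly hypothesis (iv) of Corollary \ref{cor1} with $m=0$.

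Finally, Corollary \ref{cor1} then yields that $N$ satisfies $(S_{n-m})=(S_2)$, and a second application of the $(S_2)\Leftrightarrow$reflexivity equivalence over hypersurfaces completes the proof. The whole argument is essentially bookkeeping, and I do not expect a serious obstacle; the one substantive point to make explicit is the translation between the height bound in hypothesis (ii) of Corollary \ref{cor2} and the depth bound in hypothesis (iv) of Corollary \ref{cor1}, which is what the Cohen--Macaulayness of the localizations $R_\fq$ provides.
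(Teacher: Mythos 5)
Your argument is correct and follows essentially the same route as the paper: invoke Theorem \ref{srt} to obtain reflexivity of $M$ and the Tor-vanishing, use Cohen--Macaulayness of $R$ to identify $\depth_{R_\fq}(\fp R_\fq, R_\fq)$ with $\height_R(\fp)$, and then apply Corollary \ref{cor1} with $m=0$ and $n=2$. The only (harmless) extra step is your appeal to $\Supp_R(N)=\Spec(R)$ to compare supports; the containment $\Supp_R(M\otimes_R N)\subseteq\Supp_R(M)$ holds automatically, so hypothesis (ii) already covers every $\fq$ needed in Corollary \ref{cor1}(iv).
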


\begin{proof} Assume $M\otimes_RN$ is reflexive. Then it follows from Theorem \ref{srt} that $\Tor_i^R(M,N)=0$ for all $i\geq 1$ and $M$ is reflexive. Moreover, since $R$ is Cohen-Macaulay, the equality $\depth_{R_{\fq}}(\fp R_\fq, R_\fq)=\height_R(\fp)$ holds for each $\fp, \fq \in \Spec(R)$ with $\fp \subseteq \fq$. Note also $\CI_R(M)<\infty$ as $R$ is a hypersurface; see \ref{CI}(v). Therefore, setting $m=0$ and $n=2$, we conclude from Corollary \ref{cor1} that $N$ is reflexive. 
\end{proof}

Recall that the module $N$ in Example \ref{mainex} is not reflexive. Hence, it is worth pointing out that the depth inequality in part (ii) of Corollary \ref{cor2} does not hold for the module $M$ in the example.

\begin{eg} \label{ref} Let $R$, $M$ and $N$ be as in Example \ref{mainex}, i.e., $R=\CC[\!|x,y,z,w]\!]/(xy)$, $M=R/(x)$ and let $N$ be the Auslander transpose of $R/\fp$, where $\fp=(y,z,w) \in \Spec(R)$. Let $\fq=\fm$. Then it follows that $\depth_{R_{\fq}}(\fp R_\fq, M_\fq)=\depth_R(\fp, M)=3 > \height_R(\fp)=2$.\pushQED{\qed} 
\qedhere 
\popQED
\end{eg}

Now our aim is to establish Theorem \ref{mainthmintro}, advertised in the introduction. First we prove the following general result which seems to be of independent interest.

\begin{prop} \label{propnew} Let $R$ be a local ring and let $M$ be a nonzero $R$-module. Then the following conditions are equivalent:
\begin{enumerate}[\rm(i)]
\item Each $M$-regular sequence is $R$-regular.
\item $\depth_{R}(I, M) \leq \depth_R(I,R)$ for each ideal $I$ of $R$.
\item $\depth_{R}(\fp, M) \leq \depth_R(\fp,R)$ for each $\fp \in \Spec(R)$.
\end{enumerate}
\end{prop}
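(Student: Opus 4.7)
The plan is to close the cycle (ii) $\Rightarrow$ (iii) $\Rightarrow$ (i) $\Rightarrow$ (ii). The first implication is immediate, because every prime ideal is, in particular, an ideal.

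For (iii) $\Rightarrow$ (i) I would argue the contrapositive. Suppose $x_1,\ldots,x_n$ is an $M$-regular sequence that is \emph{not} $R$-regular, and let $i$ be the smallest index for which $x_i$ is a zerodivisor on $R/(x_1,\ldots,x_{i-1})$. Then $x_1,\ldots,x_{i-1}$ is an $R$-regular sequence, and I can pick $\fp \in \Ass_R(R/(x_1,\ldots,x_{i-1}))$ containing $x_i$. This prime is the intended witness to the failure of (iii): since $(x_1,\ldots,x_i) \subseteq \fp$ is $M$-regular, one has $\depth_R(\fp,M) \geq i$; on the other hand, localization at $\fp$ preserves regularity of $x_1,\ldots,x_{i-1}$ on $R_\fp$ (all terms lie in $\fp R_\fp$), and by construction $\fp R_\fp \in \Ass_{R_\fp}(R_\fp/(x_1,\ldots,x_{i-1})R_\fp)$, forcing $\depth(R_\fp) = i - 1$. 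Applying the localization formula \ref{dcx}(ii) to the prime $\fp$ itself then gives $\depth_R(\fp,R) \leq \depth_{R_\fp}(R_\fp) = i - 1 < i \leq \depth_R(\fp,M)$, contradicting (iii).

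For (i) $\Rightarrow$ (ii), given any ideal $I$ of $R$, I would take a maximal $M$-regular sequence $y_1,\ldots,y_m$ contained in $I$; classically this has length $m = \depth_R(I,M)$, in agreement with the Koszul-complex definition recorded in \ref{dcx1}. By hypothesis (i) the sequence $y_1,\ldots,y_m$ is also $R$-regular, so $\depth_R(I,R) \geq m = \depth_R(I,M)$, which is exactly (ii).

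I do not anticipate a serious obstacle here. The delicate point is choosing the correct associated prime $\fp$ in the contrapositive step so that it simultaneously supports a long enough $M$-regular sequence and pins the depth of $R_\fp$ at $i-1$; once $\fp$ is selected, the contradiction is automatic. The only background fact used beyond what is already collected in Section 2 is the standard agreement, for finitely generated modules, between the Koszul-complex depth of \ref{dcx1} and the length of a maximal regular sequence in $I$.
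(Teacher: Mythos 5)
Your proposal is correct, but it closes the cycle of implications along a genuinely different path from the paper. The paper notes (i) $\Rightarrow$ (ii) $\Rightarrow$ (iii) from the definitions, then proves (iii) $\Rightarrow$ (ii) by writing $\sqrt{I}=\fp_1\cap\cdots\cap\fp_n$ and using $\depth_R(I,-)=\depth_R(\sqrt{I},-)=\inf_j\depth_R(\fp_j,-)$, and finally proves (ii) $\Rightarrow$ (i) by induction on the length of the sequence via the formula $\depth_R((\underline{x}),M)=\depth_R((x_n),M/(\underline{x}')M)+(n-1)$ from \cite[1.2.10(d)]{BH}. You instead prove (iii) $\Rightarrow$ (i) in one step by locating the failure at a single well-chosen prime: taking the first index $i$ at which $R$-regularity breaks and a prime $\fp\in\Ass_R(R/(x_1,\ldots,x_{i-1}))$ containing $x_i$, you get $\depth_R(\fp,M)\ge i$ while $\depth_R(\fp,R)=i-1$. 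Your detour through $R_\fp$ is sound (the quotients stay nonzero after localizing since $\fp$ lies in their supports, and associated primes localize), though it is slightly roundabout: since every element of $\fp$ is a zerodivisor on $R/(x_1,\ldots,x_{i-1})$, the sequence $x_1,\ldots,x_{i-1}$ is already a maximal $R$-regular sequence in $\fp$, giving $\depth_R(\fp,R)=i-1$ directly. Your route is shorter and avoids both the induction and the radical-decomposition step (which your cycle renders unnecessary, since (ii) is recovered from (i)); the paper's route has the side benefit of isolating the reduction from arbitrary ideals to primes as a reusable step. Both arguments rest on the same background fact you flag, namely the agreement between the Koszul-complex depth of \ref{dcx1} and the length of a maximal regular sequence in $I$ for finitely generated modules (with the trivial convention check when $I=R$), which the paper also uses implicitly in its base case.
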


\begin{proof} It follows by definition that $\rm{(i)} \Longrightarrow \rm{(ii)}  \Longrightarrow \rm{(iii)}$. So we proceed and show $\rm{(iii)} \Longrightarrow \rm{(ii)} \Longrightarrow \rm{(i)}$.

$\rm{(iii)} \Longrightarrow \rm{(ii)}$: Write $\sqrt{I}=\fp_1 \cap \ldots \cap \fp_n$ for some prime ideals $\fp_i$. Then it follows that
\begin{align}\notag{}
 \depth_R(I,M) \notag{} & =\depth_R(\sqrt{I}, M) \\& =\inf\{ \depth_R(\fp_1, M), \ldots, \depth_R(\fp_n, M) \} \notag{} \\ & \leq  \inf\{ \depth_R(\fp_1, R), \ldots, \depth_R(\fp_n, R) \} \tag{\ref{propnew}.1} \\& = \depth_R(\sqrt{I}, R) \notag{} \\& = \notag{} \depth_R(I,R).
\end{align}
Here in (\ref{propnew}.1), the first and the fourth equalities are due to \cite[1.2.10(b)]{BH} (see also \ref{dcx}(ii)), the second and third equalities are due to \cite[1.2.10(c)]{BH}, and the inequality follows by assumption.

$\rm{(ii)} \Longrightarrow \rm{(i)}$: Let $\underline{x}=x_1, \ldots, x_n \subseteq \fm$ be an $M$-regular sequence. We will show that this sequence is $R$-regular by induction on $n$. 

If $n=1$, then we have $1\leq \depth_R((x_1), M) \leq   \depth_R((x_1), R)$, which implies that $x_1$ is a non zero-divisor on $R$. Hence we assume $n\geq 2$. Then, by the induction hypothesis, it follows that $\underline{x}'=x_1, \ldots, x_{n-1}$ is $R$-regular. Thus, we have:
\begin{align}\notag{}
\notag{} \depth_R((x_n), M/ (\underline{x}') M)+(n-1) \notag{} & =  \depth_R((\underline{x}),M)  \notag{} \\ & \leq \depth_R((\underline{x}),R) \tag{\ref{propnew}.2} \\ \notag{} & = \depth_R((x_n), R/ (\underline{x}'))+(n-1)
\end{align}
Here in (\ref{propnew}.2), the equalities are due to \cite[1.2.10(d)]{BH}, while the inequality follows by assumption. Therefore, we have 
\begin{align*}
1\leq \depth_R((x_n), M/ (\underline{x}') M) \leq \depth_R((x_n), R/ (\underline{x}')R), 
\end{align*}
which implies that $x_n$ is a non zero-divisor on $R/ (\underline{x}') R$, as required.
\end{proof}

\begin{rmk} The equivalent conditions in Proposition \ref{propnew} hold if and only if, whenever $\underline{x}=x_1, \ldots, x_n $ is a sequence of elements in $\fm$ with $\Tor_1^R(M, R/\underline{x}R)=0$, it follows $\Tor_2^R(M, R/\underline{x}R)=0$; see \cite[2.2]{Kamal}.\pushQED{\qed} \qedhere \popQED
\end{rmk}

The following result is an immediate consequence of Proposition \ref{propnew}:

\begin{cor}\label{corpropnew} Let $R$ be a local ring, $I$ an ideal of $R$, and let $M$ be a nonzero $R$-module such that $\Supp_R(M)=\Spec(R)$. If $\depth_{R_{\fp}}(M_{\fp}) \leq \depth(R_{\fp})$ for all $\fp \in \Spec(R)$ (e.g., $R$ is Cohen-Macaulay, or $\CI_R(M)<\infty$), then each $M_{\fp}$-regular sequence is $R_{\fp}$-regular for all $\fp \in \Spec(R)$.
\end{cor}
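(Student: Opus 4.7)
The plan is to reduce the problem, for each $\fp \in \Spec(R)$, to Proposition \ref{propnew} applied to the pair $(R_\fp, M_\fp)$. Since $\Supp_R(M) = \Spec(R)$, the localization $M_\fp$ is nonzero, so Proposition \ref{propnew} is available. It then suffices to verify condition (iii) of that proposition at $(R_\fp, M_\fp)$, namely
$$\depth_{R_\fp}(\fq R_\fp, M_\fp) \leq \depth_{R_\fp}(\fq R_\fp, R_\fp)$$
for every prime $\fq$ of $R$ contained in $\fp$; condition (i) of Proposition \ref{propnew} then delivers exactly the desired conclusion at $\fp$, and letting $\fp$ vary completes the proof.

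For the displayed inequality, I would apply the localization formula \ref{dcx}(ii) to both sides. Primes of $R_\fp$ containing $\fq R_\fp$ correspond to primes $\fr$ of $R$ with $\fq \subseteq \fr \subseteq \fp$, and under this correspondence $(R_\fp)_{\fr R_\fp} = R_\fr$ and $(M_\fp)_{\fr R_\fp} = M_\fr$. Hence both sides become infima of $\depth_{R_\fr}(M_\fr)$ and $\depth(R_\fr)$, respectively, over this common indexing set. The standing hypothesis $\depth_{R_\fr}(M_\fr) \leq \depth(R_\fr)$ applies term by term and passes to the infima, yielding the inequality. This part is essentially a bookkeeping reduction via \ref{dcx}(ii); no clever estimate is required.

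For the parenthetical sufficient conditions: if $R$ is Cohen-Macaulay, then so is every $R_\fp$, and the chain $\depth_{R_\fp}(M_\fp) \leq \dim(M_\fp) \leq \dim(R_\fp) = \depth(R_\fp)$ supplies the hypothesis. If $\CI_R(M) < \infty$, I would invoke that complete intersection dimension localizes: given a quasi-deformation $R \to R' \twoheadleftarrow S$ witnessing $\CI_R(M) < \infty$, one localizes at suitable primes of $R'$ and $S$ lying over $\fp$ to produce a quasi-deformation for $R_\fp$, so $\CI_{R_\fp}(M_\fp) < \infty$. Then \ref{CI}(iii) gives $\CI_{R_\fp}(M_\fp) \geq 0$ and \ref{CI}(iv) gives $\CI_{R_\fp}(M_\fp) = \depth(R_\fp) - \depth_{R_\fp}(M_\fp)$, whence $\depth_{R_\fp}(M_\fp) \leq \depth(R_\fp)$. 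The only mildly non-obvious ingredient in the whole argument is this localization of $\CI$, which is standard but does require choosing the quasi-deformation with some care.
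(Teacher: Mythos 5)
Your proof is correct and follows essentially the same route as the paper: use the localization formula \ref{dcx}(ii) to convert the pointwise hypothesis on primes into the ideal-wise depth inequality required by Proposition \ref{propnew}, then invoke that proposition. You are somewhat more explicit than the paper in carrying out the reduction at each localization $(R_\fp, M_\fp)$ (where $M_\fp\neq 0$ by full support) and in justifying the parenthetical sufficient conditions, but these are exactly the steps the paper's terser proof leaves implicit.
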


\begin{proof} We have $\depth_R(I, M) = \inf \{\depth_{R_\fp}(M_\fp) \mid \fp \in \rV(I)\} \leq \inf \{\depth(R_\fp) \mid \fp \in \rV(I)\} = \depth(I, R)$; see \ref{dcx}(ii). Hence the result follows from Proposition \ref{propnew}.
\end{proof}

We are now ready to prove Theorem \ref{mainthmintro}. Let us first note a fact proved in \cite[2.6]{HW1}: if the module $M$ in Theorem \ref{mainthmintro} has full support, then $M$ and $N$ have full support so that a quick application of the depth formula shows that both $M$ and $N$ satisfy $(S_2)$, i.e., both $M$ and $N$ are reflexive; see also \ref{DF}, and \cite[1.3]{GO} for the details. Therefore, the gist of Theorem \ref{mainthmintro} is the case where $\Supp_R(M)\neq \Spec(R)$.

\begin{proof}[Proof of Theorem \ref{mainthmintro}] Let $\fq  \in \Supp_R(M)$. Then, since each $M_{\fq}$-regular sequence is $R_{\fq}$-regular by assumption, it follows from Proposition \ref{propnew} that $\depth_{R_{\fq}}(\fp R_\fq, M_\fq) \le \height_R(\fp)$ for all $\fp \in \Spec(R)$ with $\fp \subseteq \fq$. Therefore, we have that $\depth_{R_{\fq}}(\fp R_\fq, M_\fq) \le \height_R(\fp)$ for all $\fp \in \U(\fq)$. Consequently, Corollary \ref{cor2} implies that both $M$ and $N$ are reflexive.
\end{proof}

It is interesting to note that Theorem \ref{mainthmintro} (and also Theorem \ref{srt}) can fail over rings that are not hypersurfaces. For example, if $R=k[\![t^3,t^4, t^5]\!]$ and $N=(t^3, t^4)$, the canonical module of $R$, Huneke and Wiegand \cite[4.8]{HW1} constructs an $R$-module $M$ such that $M\otimes_RN$ is reflexive, but neither $M$ nor $N$ is reflexive; note that the hypotheses in parts (i) and (ii) of Theorem \ref{mainthmintro} hold for these modules $M$ and $N$. In \cite[2.1]{Con} one can find a similar example over a Gorenstein ring that is not a hypersurface.

\section{Further remarks on Theorem \ref{mainthmintro}}

Recall that an $R$-module $M$ is called \emph{Tor-rigid} provided that the following condition holds: for each $R$-module $N$ satisfying $\Tor_1^R(M,N)=0$, one has that $\Tor_2^R(M,N)=0$. Examples of Tor-rigid modules are abundant in the literature. For example, if $R$ is hypersurface, that is quotient of an unramified regular local ring, and $M$ is an $R$-module such that $\length_R(M)<\infty$ or $\pd_R(M)<\infty$, then $M$ is Tor-rigid; see  \cite[2.4]{HW1} and \cite[Theorem 3]{Li}, respectively. Tor-rigidity condition can impose certain restrictions on the ring in question. For example, Auslander \cite[4.3]{Au} proved that, if $M$ is a nonzero Tor-rigid module over a local ring $R$, then each $M$-regular sequence is an $R$-regular sequence. Note, this fact implies that the depth of a nonzero Tor-rigid module is always bounded by the depth of the ring considered.

In 2019 Celikbas, Matsui and Sadeghi \cite{Onex} examined the conclusion of Theorem \ref{srt} and studied the reflexivity of tensor products of modules over local hypersurface rings in terms of the Tor-rigidity. Their main result establishes the same conclusion of Theorem \ref{mainthmintro} for Tor-rigid modules. More precisely, the main result of \cite{Onex} shows that, if $M$ and $N$ are nonzero modules over a local hypersurface ring $R$ such that $M\otimes_RN$ is reflexive, $M$ is Tor-rigid, and $N$ has rank, then $M$ and $N$ are both reflexive; see Theorem \ref{srt} and \cite[3.1]{Onex}. Therefore, we next give examples and highlight that the Tor-rigidity condition and the condition we study in this paper, namely the condition in part (ii) of Theorem \ref{mainthmintro}, are independent of each other, in general. 

\begin{eg} \label{Hiroki} Let $R=k[\![x,y,z]\!]/(xy)$ and let $M=R/(x^2)$. Then it follows that $\Supp_R(M)\neq \Spec(R)$, $M$ is not Tor-rigid, and each $M_{\fp}$-regular sequence is $R_{\fp}$-regular for all $\fp\in \Supp_R(M)$. We justify these properties as follows:

(i) $\Supp_R(M)\neq \Spec(R)$: this is clear since $(y) \notin \Supp_R(M)$. In fact, since $\{(x), (x,y)\}$ is the set of all associated primes of $M$, it follows that $\Supp_R(M)=\rV\big((x)\big)\cup \rV\big((x,y)\big)$.

(ii) $M$ is not Tor-rigid: setting $N=R/(y)$, one can check that $\Tor_1^R(M,N)= 0\neq \Tor_2^R(M,N)$.

(iii) Each $M_{\fp}$-regular sequence is $R_{\fp}$-regular for all $\fp\in \Supp_R(M)$: note, to justify this claim, due to Proposition \ref{propnew}, we proceed to prove the following claim:
\[ \tag{\ref{Hiroki}.1}
\text{If } I \text{ is an ideal of } R \text{ such that } I \subseteq \fp\in \Supp_R(M), \text{ then } \depth_{R_{\fp}}(IR_{\fp}, M_{\fp}) \leq \height_{R_{\fp}}(IR_{\fp}).
\] 
Let $\fp \in \Supp_R(M)$ and let $I$ be an ideal of $R$ such that $I \subseteq \fp$. We look at the height of $\fp$, i.e., $\dim(R_{\fp})$.

\emph{Case 1}: Assume $\height_R(\fp)=0$. In this case the claim in (\ref{Hiroki}.1) holds as $\height_{R_{\fp}}(IR_{\fp}) \leq \dim(R_{\fp})$ and $ \depth_{R_{\fp}}(IR_{\fp}, M_{\fp}) \leq  \depth_{R_{\fp}}(M_{\fp}) \leq \dim(R_{\fp})$. 

\emph{Case 2}: Assume $\height_R(\fp)=1$. We first consider the case where $\fp=(x,y)$. As $\fp$ is an associated prime of $M$, it follows that $\depth_{R_{\fp}}(IR_{\fp}, M_{\fp}) \leq  \depth_{R_{\fp}}(M_{\fp})=0$, and so the claim in (\ref{Hiroki}.1) holds. 

Next, we consider the case where $\fp\neq (x,y)$. Note, as $\fp \in \Supp_R(M)$, we have that $(x) \subseteq \fp$. Moreover, in case $I \subseteq \fr \subsetneqq \fp$ for some $\fr \in \Spec(R)$, one can observe that $I\subseteq (x)$. As $\depth_{R_{(x)}}(M_{(x)})=0$, the aforementioned observation and \ref{dcx}(ii) yield:
\[ \tag{\ref{Hiroki}.2}
\depth_{R_{\fp}}(IR_{\fp}, M_{\fp}) = \inf \{\depth_{R_\fq}(M_\fq) \mid I \subseteq \fq \subseteq \fp\} = 
\begin{cases}
 \depth_{R_{\fp}}(M_{\fp})  \text{ if } I \nsubseteq (x)\\
0 \; \; \; \; \; \; \; \; \; \; \; \;  \; \; \; \; \; \; \; \; \; \text{ if } I \subseteq (x)\\
\end{cases}
\] 
As the equalities in (\ref{Hiroki}.2) also hold when $M$ is replaced with $R$, the claim in (\ref{Hiroki}.1) holds. 

\emph{Case 3}: Assume $\height_R(\fp)=2$, i.e., $\fp=\fm$. As $\depth_{R}(I, M) \leq  \depth_{R}(M)=1$, to establish the claim in (\ref{Hiroki}.1), it suffices to assume $\height_R(I)=0$ and show that $\depth_{R}(I, M)=0$. We observe, as each element of $I$ is a zero-divisor on $R$, that $I \subseteq (x)$ or $I \subseteq (y)$. Thus, we have $I \subseteq (x,y)$ and hence:
\[
\depth_{R}(I, M) = \inf \{\depth_{R_\fr}(M_\fr) \mid \fr \in \rV(I)\} \leq\depth_{R_{(x,y)}}(M_{(x,y)})=0. 
\] 
This completes the proof of Case 3.\pushQED{\qed} 
\qedhere 
\popQED
\end{eg}

\begin{eg}\label{exson} Let $R=k[\![x,y,z,u]\!]/(xy)$ and let $M=N \oplus T$, where $N=R/(x)$ and $T$ is an $R$-module such that $\dim_R(T)=0$ and $\pd_R(T)=\infty$ (e.g., $T=k$). Then it follows that $\Supp_R(M)\neq \Spec(R)$, $M$ is Tor-rigid, and there is an $M_{\fp}$-regular sequence which is not $R_{\fp}$-regular for some $\fp\in \Supp_R(M)$. We justify these properties as follows:

(i) $\Supp_R(M)\neq \Spec(R)$: this is clear since $(y) \notin \Supp_R(M)$. 

(ii) $M$ is Tor-rigid: to see this assume $\Tor_1^R(M, X)=0$ for some $R$-module $X$. Then $\Tor_1^R(T, X)=0$, and since $T$ is Tor-rigid \cite[2.4]{HW1}, we have that $\Tor_i^R(T, X)=0$ for all $i\geq 1$. This implies $X$ is free and hence $\Tor_i^R(M, X)=0$ for all $i\geq 1$; see \cite[2.5]{HW1} (or see \ref{CI}(v) and \ref{DF}).  Therefore, $M$ is Tor-rigid (note also that $N$ is not Tor-rigid since $\Tor_1^{R}(N, R/yR)=0 \neq R/\fp =\Tor_2^{R}(N, R/yR)$). 

(iii) There is an $M_{\fp}$-regular sequence which is not $R_{\fp}$-regular for some $\fp\in \Supp_R(M)$: for this part, let $\fp=(x,y)$. Then it follows that $\fp \in \Supp_R(M)$ and $M_{\fp} \cong N_{\fp}$. Hence, $y$ is a non zero-divisor on $M_{\fp}$. On the other hand, as $x\neq 0$, $y\neq 0$ and $xy=0$ in $R_{\fp}$, we see that $y$ is a zero-divisor on $R_{\fp}$. Thus, $\{y\}$ is an $M_{\fp}$-regular sequence which is not $R_{\fp}$-regular.
\pushQED{\qed} 
\qedhere 
\popQED
\end{eg}

The modules considered in Examples \ref{Hiroki} and \ref{exson} do not have full support. Next, in Example \ref{exson2}, we look at a module $M$ that has full support and observe, even for such a module, Tor-rigidity condition is distinct from the condition stated in part (ii) of Theorem \ref{mainthmintro}.

\begin{eg}\label{exson2} Let $R=k[\![x,y,z,u]\!]/(xu-yz)$ and let $M=(x,y)=\Omega \big(R/(x,y)\big)$. Then each $M_{\fp}$-regular sequence is $R_{\fp}$-regular because $\Supp_R(M)=\Spec(R)$; see Corollary \ref{corpropnew}. Furthermore, it follows that $\Tor^R_1(M,M)=0\neq k=\Tor^R_2(M,M)$, and hence $M$ is not Tor-rigid.\pushQED{\qed} 
\qedhere 
\popQED
\end{eg}

One can also construct examples similar to Example \ref{exson2} over rings that are not hypersurfaces.

\begin{eg} \label{exson0} Let $R$ be a Cohen-Macaulay local ring with canonical module $\omega$ such that $\omega \ncong R$; for example, $R=k[\![t^3,t^4, t^5]\!]$ and $\omega=(t^3, t^4)$. Then $M$ is not Tor-rigid; see, for example, \cite[4.13(i)]{HDT}. On the other hand, each $M_{\fp}$-regular sequence is $R_{\fp}$-regular for all $\fp \in \Spec(R)$; see Corollary \ref{corpropnew}.\pushQED{\qed} 
\qedhere 
\popQED
\end{eg}

It is well-known that the Tor-rigidity property does not localize, in general. For example, if $M$ is the module considered in Example \ref{exson}, then $M$ is Tor-rigid over $R$, but $M_{\fp}$ is not Tor-rigid over $R_{\fp}$. Furhermore, the same example also shows that the condition we consider in part (ii) of Theorem \ref{mainthmintro} does not localize in general, too. It seems worth summarizing these observations as a separate remark; see also Proposition \ref{propnew}.

\begin{rmk} \label{sonolsun} Let $R$ be a local ring and let $M$ be a nonzero $R$-module. Consider the following conditions.
\begin{enumerate}[\rm(i)] 
\item $M$ is Tor-rigid over $R$.
\item $M_{\fp}$ is Tor-rigid over $R_{\fp}$ for all $\fp \in \Supp_R(M)$.
\item Each $M$-regular sequence is $R$-regular.
\item Each $M_{\fp}$-regular sequence is $R_{\fp}$-regular for all $\fp \in \Supp_R(M)$.
\end{enumerate}
Then we have:
\vspace*{-9ex}

$$
{\small
\xymatrix@C=3em@R=3em{
& \\
\text{(i)} \ar@{=>}[r]|{\object@{}}^-{\;\;\;\; (1)}  \ar@{=>}[d] <0.8ex>|{\object@{|}}^-{ \;(3)}  & \text{\;\;(iii)} \ar@{=>}[l]<1.5ex>|{\object@{|}}^-{\;\;\;\; (2)}  \ar@{=>}[d]<1.5ex>|{\object@{|}}|{}^-{\; (6)}  &  \\ \text{(ii)} \ar@{=>}[u]<0.8ex>|{\object@{}}^-{\;\;\;\; (4)} \ar@{=>}[r]<0.6ex>|{\object@{}}^-{(7)}
& \text{(iv)} \ar@{=>}[l]<0.7ex>|{\object@{|}}^-{\;\;\;\; (8)}  \ar@{=>}[u] |{\object@{}}|{}^-{\;\;\;\; (5)} }} \\
$$

The implications in the above diagram can be justified as follows:

\noindent (1) and (7): see \cite[4.3]{Au}.\\
\noindent (2) and (8): see the module $M$ in Example \ref{exson2}.\\
\noindent (3): see the module $M$ in Example \ref{exson}.\\
\noindent (4) and (5): these follow by definition.\\
\noindent (6): in view of \cite[4.3]{Au}, see the module $M$ in Example \ref{exson}.
\end{rmk}

\appendix
\section{An application of Theorem \ref{srt}}

In this appendix, we give an application of Theorem \ref{srt}, and provide a criterion for tensor products of prime ideals to be reflexive over hypersurface rings. More precisely, we prove in Corollary \ref{appcor1} that, if $R$ be a hypersurface ring that is not a domain, and the tensor product of two prime ideals is refexive, then both of the primes considered must be minimal. Our result, which seems to be new, is based on the following observations of Asgharzadeh \cite[5.1 and 5.5]{Mohsen}; see also \cite[II.3.3]{PS2}.

\begin{rmk} \label{surp} Let $R$ be a commutative Noetherian ring, and let $\fp \in \Spec(R)$. Assume $\fp^{(n)}\neq 0$ for some $n\geq 1$, where $\fp^{(n)}= \fp^n R_{\fp} \cap R$ denotes the $n$th \emph{symbolic power} of $\fp$. Set $M=R/\fp^{(n)}$. 
\begin{enumerate}[\rm(i)]
\item Assume $M$ is Tor-rigid over $R$. Then each non zero-divisor on $M$ is a non zero-divisor on $R$ so that the canonical map $R \to R_{\fp}$ is injective; see Remark \ref{sonolsun}. Hence, $R$ is a domain if $R_{\fp}$ is a domain. 
\item Assume $\pd_R(M)<\infty$. Then each non zero-divisor on $M$ is a non zero-divisor on $R$ so that the canonical map $R \to R_{\fp}$ is injective; see \cite{R2}, \cite[6.2.3]{Robertsbook}. Also, $R$ is a domain as $R_{\fp}$ is regular.
\item Assume $\id_R(M)<\infty$. Then it follows that $R$ is Gorenstein \cite[II.5.3]{PS2} so that $\pd_R(M)<\infty$. Hence, part (ii) implies that $R$ is a domain.
\end{enumerate}
\end{rmk}

\begin{prop} \label{appp1} Let $R$ be a local hypersurface ring which is not a domain, and let $M$ be a nonzero $R$-module. Let $\fp\in \Spec(R)$ such that $\height_R(\fp)\geq 1$ and $\fp^{(n)}\neq 0$ for some $n\geq 1$. Set $N=\Omega^r\big(R/\fp^{(n)}\big)$ for some $r\geq 0$. Assume $M\otimes_RN$ is reflexive. Then it follows that $r\geq 1$, both $M$ and $N$ are reflexive, and $\pd_R(M)<\infty=\pd_R(N)$. Moreover, if $M$ is not free, then $M$ has rank at least two.
\end{prop}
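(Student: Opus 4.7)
I will derive the four conclusions in order, combining Theorem \ref{srt}, Theorem \ref{mainthmintro}, and Remark \ref{surp}. To show $r \geq 1$, I argue by contradiction: if $r = 0$ then $N = R/\fp^{(n)}$, and $\height_R(\fp) \geq 1$ ensures $\fp \not\subseteq \fq$ for every minimal prime $\fq$ of $R$, so $(R/\fp^{(n)})_{\fq} = 0$ and a fortiori $(M \otimes_R N)_{\fq} = 0$ for every minimal prime $\fq$. However, $M \otimes_R N$ is nonzero by Nakayama's lemma and is reflexive, hence torsion-free. Because $R$ is a hypersurface (so Cohen--Macaulay), $\Ass(R) = \Min(R)$, and prime avoidance forces any nonzero torsion-free $R$-module to be nonzero at some minimal prime, giving the desired contradiction.

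With $r \geq 1$, the module $N$ is an $r$-th syzygy of $R/\fp^{(n)}$. Localizing the minimal free resolution of $R/\fp^{(n)}$ at a minimal prime $\fq$ yields a split exact complex of free $R_\fq$-modules (since $(R/\fp^{(n)})_\fq = 0$), so $N_\fq$ is free of rank equal to an alternating sum of Betti numbers of $R/\fp^{(n)}$, a value independent of $\fq$. Thus $N$ has rank, and Theorem \ref{srt} yields $\Tor_i^R(M,N) = 0$ for $i \geq 1$, $M$ reflexive, $\Supp_R(N) = \Spec(R)$, and either $\pd_R(M) < \infty$ or $\pd_R(N) < \infty$. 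Since $R$ is not a domain, the contrapositive of Remark \ref{surp}(ii) forces $\pd_R(R/\fp^{(n)}) = \infty$, hence $\pd_R(N) = \infty$, and therefore $\pd_R(M) < \infty$. To show that $N$ is also reflexive, I apply Theorem \ref{mainthmintro} with the roles of $M$ and $N$ swapped: $M$ has a rank because $\pd_R(M) < \infty$ forces $M_\fq$ free at every minimal prime $\fq$ (Auslander--Buchsbaum over the Artinian local ring $R_\fq$), with common rank equal to the Euler characteristic of a finite free resolution of $M$; and every $N_\fp$-regular sequence is $R_\fp$-regular for $\fp \in \Supp_R(N) = \Spec(R)$ by Corollary \ref{corpropnew}, since $\CI_R(N) < \infty$ (as $R$ is a hypersurface) forces $\depth_{R_\fp}(N_\fp) \leq \depth(R_\fp)$ at every prime.

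The main obstacle is the final rank assertion: if $M$ is not free then $M$ has rank at least two. My plan is to suppose $M$ has rank one and derive $M \cong R$. The reflexivity of $M$, combined with $\pd_R(M) < \infty$ and the Auslander--Buchsbaum formula, forces $M_\fp$ to be free at every prime $\fp$ with $\depth(R_\fp) \leq 2$, so $M$ is locally free in codimension at most two. For the global conclusion, I would write $R = Q/(f)$ with $Q$ regular local and $f$ a non-zero-divisor, and invoke the hypersurface lifting theory (of Shamash--Eisenbud type) to produce a $Q$-module $\tilde{M}$ with $\tilde{M}/f\tilde{M} \cong M$, $f$ a non-zero-divisor on $\tilde{M}$, and $\pd_Q(\tilde{M}) = \pd_R(M) < \infty$. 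The delicate step is to verify that $\tilde{M}$ inherits rank one and reflexivity over $Q$ from the corresponding properties of $M$ over $R$; this should follow by propagating the rank and the $(S_2)$ condition through the short exact sequence $0 \to \tilde{M} \xrightarrow{f} \tilde{M} \to M \to 0$ and checking the depth behavior at primes of $Q$ not containing $f$ separately. Once $\tilde{M}$ is known to be rank-one reflexive over the UFD $Q$, it must be principal, so $\tilde{M} \cong Q$, and therefore $M \cong Q/fQ = R$, contradicting non-freeness.
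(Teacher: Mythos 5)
Your treatment of the first three conclusions is correct and close to the paper's own route: you rule out $r=0$ by observing that $R/\fp^{(n)}$ vanishes at every minimal prime while the reflexive module $M\otimes_RN$ cannot, you obtain rank for $N$ from the split exactness of the localized resolution, and you use Remark \ref{surp}(ii) to force $\pd_R(M)<\infty=\pd_R(N)$. For the reflexivity of $N$ you go through Theorem \ref{mainthmintro} and Corollary \ref{corpropnew} with the roles of the two modules swapped; the paper instead notes that both modules have rank, hence full support, and cites \cite[1.3]{GO} together with Theorem \ref{srt}. Both arguments work, and yours is only marginally longer.

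The final assertion is where there is a genuine gap. Your plan rests on producing a $Q$-module $\tilde M$ with $\tilde M/f\tilde M\cong M$ and $f$ a non-zero-divisor on $\tilde M$. There is no such lifting theorem: this is exactly Grothendieck's lifting problem, which Hochster answered in the negative by exhibiting a cyclic module of finite projective dimension over $Q/(f)$, with $Q$ regular local, that admits no lift to $Q$ (\emph{An obstruction to lifting cyclic modules}, Pacific J. Math.\ 61 (1975)). The Shamash--Eisenbud machinery you invoke goes in the opposite direction -- it builds an $R$-free resolution from a $Q$-free resolution of $M$ regarded as a $Q$-module -- and does not furnish a lift. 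Even if a lift existed, transporting rank one and reflexivity to $\tilde M$ is, as you concede, not automatic. The paper settles this step in one line with the Evans--Griffith syzygy theorem \cite[1.1]{SP}: since $M$ is reflexive over the Gorenstein ring $R$, it is a second syzygy, and a non-free second syzygy of finite projective dimension has rank at least two. You should replace the lifting argument with that appeal (or an equivalent one), since the step as written cannot be carried out in general.
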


\begin{proof} As $M\otimes_RN$ is a nonzero torsion-free $R$-module, we observe that neither $M$ nor $N$ can be torsion. This implies that $r\geq 1$. Note, since $\fp$ has positive height, it follows that $R/\fp$ is torsion, i.e., $R/\fp$ has rank zero. Thus $N$ has rank, and so the conclusions of Theorem \ref{srt} hold.

We know, by Theorem \ref{srt}, that $\pd_R(M)<\infty$ or $\pd_R(N)<\infty$. However, if $\pd_R(N)<\infty$, then $\pd_R(R/\fp^{(n)})<\infty$ and this forces $R$ to be a domain; see part (ii) of Remark \ref{surp}. Therefore, we have that $\pd_R(M)<\infty=\pd_R(N)$.

Notice, since both $M$ and $N$ have rank, both of these modules have full support. Consequently, Theorem \ref{srt} implies that both $M$ and $N$ are reflexive; see \cite[1.3]{GO}.

Now assume $M$ is not free. Then, since $M$ is reflexive, it follows that $\pd_R(M)\geq 3$. Hence, as $M$ is a second syzygy module, the syzygy theorem of Evans and Griffith \cite[1.1]{SP} (see also \cite{Andre}, \cite[9.5.6]{BH}, and \cite{Ogata}) forces $M$ to have rank at least two.
\end{proof}

The positive height assumption on the prime ideal considered in Proposition \ref{appp1} cannot be removed.

\begin{eg} \label{appeg1} Let $R=k[\![x,y]\!]/(xy)$, $\fp=(x)$, and let $\fq=(y)$. Then $\fp$ and $\fq$ are the minimal prime ideals of $R$. Set $M=R/(x^2)$ and $N=\Omega(R/\fp)$. Then $\pd_R(M)=\infty$, but $M\otimes_R N \cong N$ is a reflexive $R$-module.
\end{eg}

The next corollary of Proposition \ref{appp1} yields the criterion we seek concerning the tensor products of prime ideals over local hypersurface rings:

\begin{cor}  \label{appcor1} Let $R$ be a local hypersurface ring that is not a domain, and let $\fp, \fq \in \Spec(R)$. Assume $\fp^{(r)}\neq 0$ and $\fp^{(s)}\neq 0$ for some $r\geq 1$ and $s\geq 1$. If $\fp$ or $\fq$ has positive height, then $\fp^{(r)} \otimes_R \fq^{(s)}$ is not a reflexive $R$-module. Therefore, if  $\fp \otimes_R \fq$ is reflexive, then both $\fp$ and $\fq$ are minimal primes.
\end{cor}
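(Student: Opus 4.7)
My plan is to argue by contradiction using Proposition~\ref{appp1}. Because the tensor product is symmetric, I may assume without loss of generality that $\fp$ has positive height. The key observation is that $\fp^{(r)}$ is itself the first syzygy of the cyclic module $R/\fp^{(r)}$: the short exact sequence $0\to \fp^{(r)}\to R\to R/\fp^{(r)}\to 0$ identifies $\fp^{(r)}$ with $\Omega^{1}(R/\fp^{(r)})$ on the nose, since $R\twoheadrightarrow R/\fp^{(r)}$ is already a minimal free cover in the local ring $R$.

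With this identification in hand, I will apply Proposition~\ref{appp1} with the module $M=\fq^{(s)}$, the prime $\fp$, the exponent $n=r$, and $N=\Omega^{1}(R/\fp^{(r)})=\fp^{(r)}$. Assuming $\fp^{(r)}\otimes_{R}\fq^{(s)}$ is reflexive, the proposition delivers $\pd_R(\fq^{(s)})<\infty$. The short exact sequence $0\to \fq^{(s)}\to R\to R/\fq^{(s)}\to 0$ then upgrades this to $\pd_R(R/\fq^{(s)})<\infty$. Applying Remark~\ref{surp}(ii) with the prime $\fq$ and the exponent $s$ (which is permitted since $\fq^{(s)}\neq 0$), I conclude that $R$ must be a domain, contradicting the standing hypothesis. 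Hence $\fp^{(r)}\otimes_{R}\fq^{(s)}$ cannot be reflexive.

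The final assertion of the corollary is a contrapositive specialization: taking $r=s=1$, if $\fp$ or $\fq$ fails to be a minimal prime of $R$, then it has positive height, and the first part forces $\fp\otimes_R\fq$ to be non-reflexive. I do not anticipate any serious obstacle: Proposition~\ref{appp1} and Remark~\ref{surp}(ii) do essentially all of the work. The only point requiring any care is orchestrating the roles in Proposition~\ref{appp1} so that the positive-height prime lives on the syzygy side (as $N$), at which point the finite projective dimension conclusion can be funneled through Remark~\ref{surp}(ii) to contradict the non-domain assumption on $R$.
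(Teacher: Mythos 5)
Your argument is correct and is precisely the intended derivation: the paper states Corollary~\ref{appcor1} as an immediate consequence of Proposition~\ref{appp1} (taking $N=\Omega^{1}(R/\fp^{(r)})=\fp^{(r)}$ with the positive-height prime on the syzygy side) combined with Remark~\ref{surp}(ii), which is exactly how you proceed. The only cosmetic point is that the hypothesis ``$\fp^{(s)}\neq 0$'' in the statement is evidently a typo for ``$\fq^{(s)}\neq 0$'', and you correctly read it that way when invoking Remark~\ref{surp}(ii) for $\fq$.
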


In view of Corollary \ref{appcor1},  it is worth noting that the tensor product of two minimal prime ideals over a non-domain hypersurface ring may, or may not, be reflexive. 

\begin{eg} Let $R$, $\fp$ and $\fq$ be as in Example \ref{appeg1}. Then $\fp$ and $\fq$ are the minimal prime ideals of $R$. It follows that $\fp \cong R/(y)$ and $\fp \otimes_R\fp \cong \fp$ are reflexive $R$-modules. On the other hand, the tensor product $\fp \otimes_R \fq \cong k$ is not reflexive.
\end{eg}

\section*{Acknowledgements}
The authors thank Mohsen Asgharzadeh and Greg Piepmeyer for their comments and suggestions on an earlier version of the manuscript. We also thank 
Arash Sadeghi for pointing us the rank conclusion in Proposition \ref{appp1}.

\end{document}